\newtheorem{theorem}{Theorem}
\def\qed{\ifvmode\mbox{ }\else\unskip\fi\hskip 1em plus 10fill$\Box$}
\def\Ddots{\mathinner{\mkern1mu\raise\p@
		\vbox{\kern7\p@\hbox{.}}\mkern2mu
		\raise4\p@\hbox{.}\mkern2mu\raise7\p@\hbox{.}\mkern1mu}}
\author{Quentin Dubroff}
\address{Department of Mathematics, Rutgers University, Piscataway, NJ, USA}
\email{quentin.dubroff@rutgers.edu}
\author{Jacob Fox}
\thanks{Fox is supported by a Packard Fellowship and by NSF award DMS-1855635. Xu is supported by the Cuthbert C. Hurd Graduate Fellowship in the Mathematical Sciences, Stanford}
\address{Department of Mathematics, Stanford University, Stanford, CA, USA}
\email{jacobfox@stanford.edu}
\author{Max Wenqiang Xu}
\address{Department of Mathematics, Stanford University, Stanford, CA, USA}
\email{maxxu@stanford.edu}
\begin{document}
	\title{\vspace{-0.7cm} A note on the Erd\H{o}s distinct subset sums problem}
	\maketitle
	
	\begin{abstract}
		We present two short proofs giving the best known asymptotic lower bound for the maximum element in a set of $n$ positive integers with distinct subset sums. 
	\end{abstract}
	
	Let $\{a_1,\ldots,a_n\}$ be a set of positive integers with $a_1<\ldots<a_n$ and all subset sums distinct.  
	Erd\H{o}s conjectured that $a_n \ge c \cdot 2^{n}$ for some constant $c>0$ and offered $\$ 500$ for a proof or disproof. See \cite{Conjecture} for more history on this. Using the second moment method, Erd\H{o}s and Moser \cite{Erdos Moser} (see also   \cite{AlSp}) proved 
	\[a_n \ge \frac{1}{4}\cdot n^{-1/2}\cdot 2^{n}.\]
	There have been some improvements on the constant factor of the above lower bound, including the work of Guy \cite{Guy}, Elkies \cite{Elkies}, Bae \cite{Bae}, and Aliev \cite{Aliev}. In particular, the previous best published lower bound was due to Aliev, which gave the constant factor $\sqrt{3/2\pi}$. In the other direction, the best known construction is due to Bohman \cite{Bohman bound}, where he constructed such sets with $a_n \le 0.22002 \cdot 2^{n}$.

	In this note, we  give two proofs of the best known lower bound, matching an unpublished result of
	Elkies and Gleason (see \cite{Aliev}).
	\begin{theorem}\label{main}
		If a set $\{a_1,\ldots,a_n\}$ of integers with $0<a_1<\ldots<a_n$ has all subset sums distinct, then
		\[a_n \geq \left(\sqrt{\frac{2}{\pi}}-o(1)\right)\cdot n^{-1/2} \cdot 2^n .\]
	\end{theorem}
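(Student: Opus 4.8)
The plan is to run a sharpened second-moment argument in which the random signed sum is compared, at the local scale, with a Gaussian. Introduce i.i.d.\ signs $\varepsilon_1,\dots,\varepsilon_n\in\{-1,+1\}$ and set $S=\sum_{i=1}^n\varepsilon_i a_i$. Since the $2^n$ subset sums are distinct, so are the $2^n$ signed sums, and hence $S$ is uniform on a set of $2^n$ integers, all congruent to $\sum_i a_i\pmod 2$; in particular $\max_x\Pr(S=x)=2^{-n}$, $\mathbb E[S]=0$, and $\sigma^2:=\mathrm{Var}(S)=\sum_i a_i^2\le n\,a_n^2$, so that $a_n\ge\sigma/\sqrt n$ always holds. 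Thus it suffices to prove $\sigma\ge(\sqrt{2/\pi}-o(1))\,2^n$.

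I would use two inputs: an anti-concentration upper bound and a central limit lower bound. For the first, the admissible values of $S$ are integers of a fixed parity, hence spaced at least $2$ apart, so any interval $[-R,R]$ contains at most $R+1$ of them; since each carries mass $2^{-n}$,
\[
\Pr(|S|\le R)\ \le\ (R+1)\,2^{-n}\qquad(R\ge 0).
\]
For the second, I would first dispose of a degenerate case: if $a_n\ge c\,\sigma$ for a fixed $c>0$, then, since the displayed bound already forces $\sigma\ge(\tfrac1{\sqrt3}-o(1))2^n$, we get $a_n\gg n^{-1/2}2^n$ and are done. So assume $a_n=o(\sigma)$. Then no single term carries a nonnegligible share of the variance, the Lindeberg condition holds trivially (eventually each $|\varepsilon_i a_i|\le a_n<\eta\sigma$), and therefore $S/\sigma$ converges in distribution to $N(0,1)$.

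Now I would combine the two. For each fixed $r>0$, the CLT gives $\Pr(|S|\le r\sigma)\to 2\Phi(r)-1$, while anti-concentration gives $\Pr(|S|\le r\sigma)\le (r\sigma+1)2^{-n}$. Letting $n\to\infty$ and using $2^{-n}\to0$ yields $2\Phi(r)-1\le r\,\liminf_n(\sigma\,2^{-n})$, i.e.
\[
\liminf_{n\to\infty}\ \sigma\,2^{-n}\ \ge\ \sup_{r>0}\frac{2\Phi(r)-1}{r}\ =\ \lim_{r\to0^+}\frac{2\Phi(r)-1}{r}\ =\ 2\Phi'(0)\ =\ \sqrt{\tfrac2\pi},
\]
where the supremum is attained as $r\to 0^+$ because $2\Phi(r)-1$ is concave on $(0,\infty)$ and vanishes at $0$, with derivative $\sqrt{2/\pi}$ there. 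Hence $\sigma\ge(\sqrt{2/\pi}-o(1))2^n$, and $a_n\ge\sigma/\sqrt n$ closes the argument.

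The crux is the interchange of the two limits: the sharp constant only emerges upon sending $r\to0$ \emph{after} $n\to\infty$, so I must keep $r$ fixed while applying the CLT and optimize over $r$ only afterwards. The single genuine case distinction---whether the largest term dominates the variance---is exactly what makes the argument uniformly valid: when Lindeberg fails (as for super-increasing sets, where $S/\sigma$ tends to a uniform rather than a Gaussian limit and the constant would degrade to $1/\sqrt3$), the maximal element $a_n$ is already exponentially large, so the weaker limiting profile costs nothing.
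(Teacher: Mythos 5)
Your proposal is correct, and at its core it is the same argument as the paper's first proof: both replace Chebyshev's inequality in the Erd\H{o}s--Moser second-moment argument by a normal approximation for the random signed sum, both use the identical anti-concentration bound $\Pr(|S|\le R)\le (R+1)2^{-n}$ coming from distinctness plus parity, and both split into cases according to whether $a_n$ is a nonnegligible fraction of $\sigma$. The genuine difference is in how the normal approximation is implemented. The paper uses the quantitative Berry--Esseen theorem: in the case $\sigma>a_n/\delta_n$ (with $\delta_n\to0$ slowly) the CDF distance to normal is at most $a_n/\sigma<\delta_n$, and one compares the two bounds at scale $\ell=\alpha_n\sigma$ with $\delta_n=o(\alpha_n)$, $\alpha_n\to0$, so the whole argument is pointwise in $n$. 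You instead invoke the soft Lindeberg CLT for triangular arrays at a \emph{fixed} scale $r\sigma$, send $n\to\infty$ first and $r\to0^+$ afterwards; this trades the quantitative tool for a limit interchange, which you correctly identify as the crux, and the concavity argument giving $\sup_{r>0}(2\Phi(r)-1)/r=\sqrt{2/\pi}$ is right. Two points need tightening. First, your dichotomy is not exhaustive as stated: for a sequence of sets, ``$a_n\ge c\sigma$ for a fixed $c>0$'' and ``$a_n=o(\sigma)$'' do not cover the case where $a_n/\sigma$ oscillates. The repair is routine but must be said: argue by contradiction, pass to a subsequence realizing $\liminf a_n\sqrt{n}\,2^{-n}$, and then to a further subsequence along which $a_n/\sigma$ converges; your two cases then apply verbatim. (You cannot instead mimic the paper's moving threshold $c=c_n\to0$, because the soft CLT gives no uniformity over arrays; this compactness step is the real price of avoiding Berry--Esseen.) Second, your claim that the displayed bound ``already forces $\sigma\ge(\tfrac{1}{\sqrt3}-o(1))2^n$'' is true but deserves its one line, e.g.
\[
\sigma^2=\mathbb{E}[S^2]=\int_0^\infty 2t\,\Pr(|S|>t)\,dt\ \ge\ \int_0^{2^n-1}2t\left(1-(t+1)2^{-n}\right)dt\ =\ \left(\tfrac13-o(1)\right)4^n,
\]
which recovers Moser's variance bound, the inequality the paper quotes for exactly this purpose; alternatively Chebyshev combined with the displayed bound gives the weaker constant $\tfrac38$, which also suffices since any constant-factor lower bound $\sigma=\Omega(2^n)$ closes the degenerate case.
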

	The second proof uses an isoperimetric inequality to show that $a_n \ge \binom{n}{\lfloor n/2 \rfloor}$ for all $n$, which asymptotically matches the lower bound above.
	The first proof uses the following result from probability.
	\begin{theorem}[Berry-Esseen]\label{BE} Let $X_1,\ldots,X_n$ be independent random variables with $\mathbb{E}[X_i]=0$, $\mathbb{E}[X_i^2]=\sigma_i^2$, and $\mathbb{E}[|X_i|^3] = \rho_i <\infty$. Let $X=X_1+\cdots+X_n$ and  $\sigma^2=\mathbb{E}[X^2]$. Then 
		$$\sup_{x \in \mathbb{R}} |F(x)-\Psi(x)| \leq C \cdot \psi,$$
		where $F(x)$ and $\Psi(x)$ are the cumulative distribution functions for $X$ and the normal distribution with mean zero and standard deviation $\sigma$, respectively, $C$ is an absolute constant, and $\psi=\sigma^{-3}\cdot \sum_{i=1}^n \rho_i$.
	\end{theorem}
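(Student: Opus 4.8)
The plan is to prove Berry--Esseen by the classical Fourier-analytic route, in two movements: an \emph{Esseen smoothing inequality} that converts the Kolmogorov distance $\sup_x|F(x)-\Psi(x)|$ into an integral of the difference of characteristic functions over a bounded frequency window, followed by a direct estimate of that difference via Taylor expansion of the $X_i$. Two harmless reductions come first. Rescaling every $X_i$ by $\sigma^{-1}$ we may assume $\sigma=1$, so that $\sum_{i=1}^n\sigma_i^2=1$ and $\psi=\sum_{i=1}^n\rho_i$; and we may assume $\psi$ is smaller than a fixed absolute constant $\varepsilon_0$, since otherwise $C\psi\geq C\varepsilon_0\geq 1\geq\sup_x|F(x)-\Psi(x)|$ for $C$ large and there is nothing to prove. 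Throughout write $f_i(t)=\mathbb{E}[e^{itX_i}]$ for the characteristic function of $X_i$, so that $X$ has characteristic function $\phi(t)=\prod_{i=1}^n f_i(t)$, while the target normal $\Psi$ has characteristic function $e^{-t^2/2}$.

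First I would record and prove the smoothing inequality: if $F$ is any distribution function and $G$ a differentiable one with $\sup_x G'(x)\leq m$, having characteristic functions $\hat F,\hat G$, then for every $T>0$,
\[
\sup_{x\in\mathbb{R}}|F(x)-G(x)|\ \leq\ \frac{1}{\pi}\int_{-T}^{T}\left|\frac{\hat F(t)-\hat G(t)}{t}\right|\,dt\ +\ \frac{24m}{\pi T}.
\]
The proof convolves $\Delta=F-G$ with a fixed nonnegative kernel whose Fourier transform is supported in $[-T,T]$ (the Fej\'er kernel); the Fourier inversion formula rewrites the convolution as the displayed integral, while the uniform bound $m$ on $G'$ controls the error incurred by smoothing, turning a bound on $\sup|\Delta * K|$ into one on $\sup|\Delta|$. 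Applying this with $G=\Psi$ and $m=(2\pi)^{-1/2}$ reduces the theorem to proving
\[
\int_{-T}^{T}\frac{|\phi(t)-e^{-t^2/2}|}{|t|}\,dt\ =\ O(\psi)\qquad\text{for a suitable }T\asymp 1/\psi .
\]

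The core is then a pointwise estimate of $|\phi(t)-e^{-t^2/2}|$ on the window $|t|\leq T$. Since $\mathbb{E}[X_i]=0$ and $\mathbb{E}[X_i^2]=\sigma_i^2$, Taylor's theorem with a third-moment remainder gives $|f_i(t)-(1-\tfrac12\sigma_i^2t^2)|\leq\tfrac16\rho_i|t|^3$, and comparing with the exponential gives $|f_i(t)-e^{-\sigma_i^2t^2/2}|\leq c(\rho_i|t|^3+\sigma_i^4t^4)$. As characteristic functions and Gaussians all have modulus at most $1$, the telescoping identity $\prod a_i-\prod b_i=\sum_k(a_k-b_k)\prod_{i<k}a_i\prod_{i>k}b_i$ lets me replace $f_i$ by $e^{-\sigma_i^2t^2/2}$ one factor at a time while retaining the Gaussian factors, and I would show that the retained product supplies genuine decay $e^{-t^2/3}$ on the window, yielding a bound of the shape
\[
\bigl|\phi(t)-e^{-t^2/2}\bigr|\ \leq\ c\,\bigl(\psi|t|^3+(\textstyle\sum_i\sigma_i^4)\,t^4\bigr)\,e^{-t^2/3}.
\]
Integrating against $dt/|t|$, the first term contributes $O(\psi)$ directly, and the second contributes $O(\sum_i\sigma_i^4)$; here one uses that in the regime $\psi\leq\varepsilon_0$ the variance cannot concentrate, forcing $\sum_i\sigma_i^4=O(\psi^{4/3})$, which is $O(\psi)$, so the whole integral is $O(\psi)$ as required. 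Combined with the remainder term $24m/(\pi T)=O(\psi)$, this proves the theorem.

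The step I expect to be the main obstacle is exactly the pointwise estimate over the \emph{full} window rather than for infinitesimal $t$: the per-variable Taylor bounds are trustworthy only when $\sigma_i^2t^2$ is small, yet for $|t|$ as large as $T\asymp1/\psi$ the individual arguments $\sigma_i^2t^2$ can be large, so one cannot expand each factor naively and must instead argue that the \emph{product} $\prod_i|f_i(t)|$ retains the Gaussian envelope $e^{-t^2/3}$ throughout the window and that the quartic error $\sum_i\sigma_i^4 t^4$ is absorbed using the concentration bound on the variances. Getting the window radius $T\asymp1/\psi$, the envelope exponent, and this absorption to fit together is the delicate bookkeeping that produces the exact order $\psi$. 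An alternative, Fourier-free, route is Stein's method: solve the Stein equation $g'(x)-xg(x)=h(x)-\mathbb{E}[h(Z)]$ for $Z$ standard normal, estimate $\mathbb{E}[g'(X)-Xg(X)]$ by a leave-one-out expansion exploiting $\mathbb{E}[X_i]=0$ and $\mathbb{E}[X_i^2]=\sigma_i^2$, and add a smoothing step to pass from smooth test functions $h$ to the indicator $\mathbf{1}_{(-\infty,x]}$; I expect the smoothing-inequality route above to be the more economical one here.
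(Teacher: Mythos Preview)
The paper does not prove this statement at all: Theorem~\ref{BE} is quoted as a classical result from probability, with a reference to \cite{Shev} for the constant, and is used as a black box in the proof of Theorem~\ref{main}. There is therefore no ``paper's own proof'' to compare your proposal against.

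That said, your outline is the standard Fourier-analytic proof (Esseen's smoothing lemma plus a Taylor/telescoping estimate of $|\phi(t)-e^{-t^2/2}|$ on a window of radius $T\asymp 1/\psi$), and the structure is correct. Your absorption step is also right: from $\sigma_i^3\leq\rho_i$ one gets $\sigma_i^4\leq\rho_i^{4/3}$, and since $\ell^1\hookrightarrow\ell^{4/3}$ for sequences, $\sum_i\sigma_i^4\leq\sum_i\rho_i^{4/3}\leq(\sum_i\rho_i)^{4/3}=\psi^{4/3}=O(\psi)$ in the regime $\psi\leq\varepsilon_0$. The point you correctly flag as delicate---maintaining a genuine Gaussian envelope $e^{-ct^2}$ for the product $\prod_i|f_i(t)|$ across the full window, not just for small $t$---is handled in the standard references by the inequality $|f_i(t)|\leq 1-\tfrac12\sigma_i^2t^2+\tfrac16\rho_i|t|^3\leq\exp(-\tfrac12\sigma_i^2t^2+\tfrac16\rho_i|t|^3)$, which on $|t|\leq 1/\psi$ gives $\prod_i|f_i(t)|\leq e^{-t^2/3}$. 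So your plan would go through, but for the purposes of this paper you should simply cite the theorem rather than reprove it.
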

	It is known  \cite{Shev} that one can take $C=0.56$ in the Berry-Esseen theorem. 
	
	Using the Berry-Esseen theorem and an inequality of Moser, our proof verifies Erd\H{o}s' conjecture that $a_n=\Omega(2^n)$ if the distribution of a random subset sum is not close to normal. If the distribution is close to normal, we 
	replace Chebyshev's inequality in the second moment argument by the normal distribution in bounding the probability that a random subset sum is in an interval. 
	
	\newpage
	
	\begin{proof}[Proof of Theorem~\ref{main}]
		Let $\epsilon_1,\ldots,\epsilon_n \in \{-1,1\}$ be independently and uniformly distributed and let $X=\epsilon_1a_1+\ldots+\epsilon_na_n$. 
		The random variable $X$ has mean $0$ and is symmetric around $0$. Moreover, each of its possible values occurs with probability $2^{-n}$ by the distinct subset sums property, and each of its possible values are of the same parity. Note that the variance of $X$ is $\sigma^2=\sum_{i=1}^n a_i^2$. 
		
		Let $\delta=\delta_n$ tend to $0$ slowly as $n$ tends to infinity, with $ \delta > n^{-1/2} $, e.g., $\delta = 1/\log n$ works.  We first notice that if  $\sigma \leq a_n/\delta$, then  Moser's lower bound on the variance \cite{Guy}:
		\[\sigma^2=\sum_{1\le i \le n} a_i^{2} \ge 1^{2} + 2^{2} + 4^{2} +\cdots 2^{2(n-1)} =\frac{4^{n}-1}{3}\]
		gives the desired lower bound $a_n \geq \delta \cdot 2^{n-1}>n^{-1/2}\cdot 2^{n}$.
		
		It remains to consider the case $\sigma > a_n/\delta$. We next apply Theorem~\ref{BE}. Note that $X=\sum_{i=1}^n X_i$, where $X_i=\epsilon_i a_i$. We have $\rho_i=a_i^3$  and $\sum_{i=1}^n \rho_i \leq a_n \cdot \sum_{i=1}^n a_i^2=a_n\sigma^2$, which implies $\psi \leq a_n/\sigma < \delta $ by the definition of $\psi$ and the bound on $\sigma$. By Theorem~\ref{BE}, we have $\sup_{x \in \mathbb{R}} |F(x)-\Psi(x)| \leq \delta$, that is, $X$ is $\delta$-close to normal. We bound $\textrm{Pr}[|X| \leq \ell]$ in two different ways to get a lower bound for $\sigma$.  
		
		Since the $2^n$ possible outcomes for $X$ have the same parity and are distinct, we obtain 
		\begin{equation}\label{upper}
		\textrm{Pr}[|X| \leq \ell] \leq (\ell+1)\cdot 2^{-n}.
		\end{equation}
		On the other hand, let $\ell=\alpha \sigma$ with $\alpha = \alpha_n$ tending to $0$ as $n$ tends to infinity with $\delta = o(\alpha)$, e.g., $\alpha  = \sqrt{\delta} $ works. As $X$ is $\delta$-close to a normal distribution, we obtain that
		\begin{equation}\label{asym}
		\textrm{Pr}[|X| \leq \ell] = \frac{1}{\sigma \sqrt{2 \pi}} \int_{-\ell}^{\ell}\exp \big(-\frac{x^{2}}{2\sigma ^{2}}\big)dx +O(\delta) \sim \sqrt{\frac{2}{\pi}} \, \frac{\ell}{\sigma}.
		\end{equation}
		Comparing \eqref{upper} and \eqref{asym}, we obtain that $\sigma \geq\left(\sqrt{2/\pi}-o(1)\right) \cdot 2^n$. Comparing this with the upper bound $\sigma^2 =a_1^2+\cdots+a_n^2 \leq na_n^2$, we have $a_n \geq \left(\sqrt{2/\pi}-o(1)\right)\cdot n^{-1/2} \cdot 2^n$ as desired.  
	\end{proof}

	The second proof will use the following special case of Harper's vertex-isoperimetric inequality. We write $2^{[n]}$ for the set of subsets of $\{1,2,\ldots, n\}$, and for $\mathcal{F}\subseteq 2^{[n]}$ we define the vertex boundary $\partial \mathcal{F} = \{A\in \mathcal{F}^c : |A\triangle B|=1 \text{ for some } B\in \mathcal{F}\}$.
	
	\begin{theorem}[Harper]\label{Harper} If $\mathcal{F}\subset 2^{[n]}$ such that $|\mathcal{F}|= 2^{n-1}$, then $|\partial \mathcal{F}|\geq \binom{n}{\lfloor n/2 \rfloor}$.
	\end{theorem}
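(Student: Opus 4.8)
The plan is to prove this by the method of compressions, reducing an arbitrary $\mathcal{F}$ with $|\mathcal{F}|=2^{n-1}$ to a highly structured family while controlling the vertex boundary. For each coordinate $i\in[n]$ I would use the down-compression $C_i$: for every pair $\{B,B\cup\{i\}\}$ with $i\notin B$, if exactly one of the two sets lies in $\mathcal{F}$ then replace it by the smaller set $B$, and otherwise leave the pair unchanged. By construction $C_i$ preserves cardinality, so $|C_i(\mathcal{F})|=|\mathcal{F}|=2^{n-1}$, and intuitively it pushes the family ``downward'' toward a Hamming ball centered at $\emptyset$.

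The key lemma is that compression does not increase the vertex boundary, i.e. $|\partial C_i(\mathcal{F})|\le |\partial\mathcal{F}|$. Writing $N[\mathcal{G}]=\mathcal{G}\cup\partial\mathcal{G}$ for the closed neighborhood, this follows from the inclusion $N[C_i(\mathcal{F})]\subseteq C_i(N[\mathcal{F}])$, which one checks pair-by-pair; since $|C_i(N[\mathcal{F}])|=|N[\mathcal{F}]|$ and $|C_i(\mathcal{F})|=|\mathcal{F}|$, the boundary cannot grow. Each $C_i$ that changes $\mathcal{F}$ strictly decreases $\sum_{A\in\mathcal{F}}|A|$, so iterating the $C_i$ over all coordinates terminates at a family $\mathcal{D}$ with $|\mathcal{D}|=2^{n-1}$, $|\partial\mathcal{D}|\le|\partial\mathcal{F}|$, that is stable under every $C_i$. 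Stability under all $C_i$ says exactly that $B\cup\{i\}\in\mathcal{D}\Rightarrow B\in\mathcal{D}$ for all $i$ and $B$, i.e. $\mathcal{D}$ is a down-set. Hence it suffices to prove the inequality for down-sets.

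For a down-set the boundary is clean: $\partial\mathcal{D}=\{A\notin\mathcal{D}:A\setminus\{i\}\in\mathcal{D}\text{ for some }i\in A\}$, and every maximal chain $\emptyset=C_0\subset C_1\subset\cdots\subset C_n=[n]$ meets $\mathcal{D}$ in an initial segment, hence passes through a boundary element at the step where it first leaves $\mathcal{D}$. To pin down the extremal down-set I would apply further coordinate compressions of Harper's type (replacing the index $j$ by a smaller index $i$ whenever possible), which preserve cardinality and the down-set property and do not increase the boundary, reducing $\mathcal{D}$ to an initial segment of the simplicial order: a Hamming ball around $\emptyset$ together with an initial segment of a single level. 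For $|\mathcal{D}|=2^{n-1}$ and $n=2k+1$ odd this ball is exactly $\{A:|A|\le k\}$, whose boundary is the single layer $\{A:|A|=k+1\}$ of size $\binom{n}{k+1}=\binom{n}{\lfloor n/2\rfloor}$; the even case $n=2k$ is a short computation that adds half of the middle layer and yields the same bound $\binom{n}{\lfloor n/2\rfloor}$.

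I expect the two genuinely substantive points to be the following. First, the monotonicity lemma $|\partial C_i(\mathcal{F})|\le|\partial\mathcal{F}|$: the neighborhood inclusion must be verified carefully, treating separately the sets that are moved down and the neighbors they create. Second, and harder, the identification of the fully compressed down-set with an initial segment of the simplicial order — this is precisely the part of Harper's theorem that does not follow formally from compression alone, since a family can be stable under all the compressions without visibly being a ball, and it requires a direct structural argument to rule out the remaining configurations before computing the boundary.
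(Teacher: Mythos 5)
The paper itself does not prove this theorem: it quotes Harper's result and points to \cite{Harp}, \cite{Kleit}, and \cite{Imre} for proofs, so your proposal must stand on its own. Its first half does stand: down-compressions $C_i$ preserve cardinality and do not increase the vertex boundary, and the neighborhood inclusion $N[C_i(\mathcal{F})]\subseteq C_i(N[\mathcal{F}])$ you assert is true and can be verified column-by-column over the pairs $\{B,B\cup\{i\}\}$ (the only way compression creates a new bottom neighbor is when some neighboring column was nonempty, in which case the compressed neighborhood column is accounted for). So the reduction to down-sets is correct.

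The gap is the second half, and you have in effect flagged it yourself without repairing it. Down-compressions together with coordinate-swap compressions (replacing $j$ by a smaller $i$) do \emph{not} reduce a family to an initial segment of the simplicial order: for $n=3$, the subcube $\left\{\emptyset,\{1\},\{2\},\{1,2\}\right\}$ has size $2^{n-1}=4$, is a down-set, is invariant under every such compression, and is not the Hamming ball $\{A: |A|\le 1\}$ (its boundary has size $4$, the ball's has size $3$). So the compression process terminates at families that are not balls, and proving the boundary inequality for \emph{all} such terminal families is precisely the content of Harper's theorem; your closing paragraph concedes this ``requires a direct structural argument'' but supplies none, and the configurations cannot be ``ruled out'' --- they exist, and must instead be shown to satisfy the bound. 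The standard repair (Harper's, as presented in the Kleitman and Leader references the paper cites) uses much stronger codimension-1 compressions: each section $\mathcal{F}^{(i)}_0,\mathcal{F}^{(i)}_1$ in direction $i$ is replaced by an initial segment of the simplicial order on $2^{[n]\setminus\{i\}}$, which requires the theorem in dimension $n-1$ as an induction hypothesis plus the fact that the neighborhood of an initial segment is again an initial segment, and even then a final analysis of compressed-but-not-initial-segment families is needed. One further caution for the endgame you sketch: the within-level order in the simplicial order must be lexicographic, not colexicographic. For $n=4$ the extremal family is the sets of size at most $1$ together with the three $2$-sets containing the element $1$, with boundary exactly $\binom{4}{2}=6$, whereas the colex-based segment (adding $\{1,2\},\{1,3\},\{2,3\}$) has boundary $7$; so an argument aimed at the wrong segment cannot identify the extremal family.
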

	
	See \cite{Harp} for a full statement of Harper's theorem as well as \cite{Kleit} and \cite{Imre} for a particularly nice proof.

	\begin{proof}[A second proof of Theorem~\ref{main}]
		Set $a=(a_1,\ldots, a_n)$ and note that $a \cdot \epsilon \neq 0$ for all $\epsilon \in \{-\frac{1}{2},\frac{1}{2}\}^n$. Let $\mathcal{F}$ be the $2^{n-1}$ points $\epsilon$ in $\{-\frac{1}{2},\frac{1}{2}\}^n$ such that $a \cdot \epsilon <0$. Identifying $\{-\frac{1}{2},\frac{1}{2}\}^n$ with $2^{[n]}$ in the natural way, we have $\eta \in \partial \mathcal{F}$ if $\eta \pm e_i \in \mathcal{F}$ for some standard basis vector $e_i$. Theorem~\ref{Harper} then gives $|\partial \mathcal{F}| \geq \binom{n}{\lfloor n/2 \rfloor}$, where each $\eta \in \partial \mathcal{F}$ must satisfy $0< a\cdot  \eta < a_n$. Therefore there are distinct $\epsilon, \eta$ $\in \partial \mathcal{F}$ such that $|a \cdot (\epsilon - \eta)| \leq a_n/\binom{n}{\lfloor n/2 \rfloor}$. However, $\epsilon -\eta \in \{-1,0,1\}^n$, so $|a \cdot (\epsilon - \eta)| =|\sum_{i\in I} a_i - \sum_{j\in J} a_j|$ for some $I,J \subseteq [n]$, and we get $|a \cdot (\epsilon - \eta)| \geq 1$ by the distinct subset sum property. Comparing the lower and upper bound on $|a \cdot (\epsilon - \eta)|$ gives $a_n \geq \binom{n}{\lfloor n/2 \rfloor } = \left(\sqrt{2/\pi} - o(1) \right)\cdot n^{-1/2} \cdot 2^n$.
	\end{proof}
	{\bf Acknowledgements.} We would like to thank Noga Alon, Matthew Kwan, Bhargav Narayanan, Joel Spencer and Yufei Zhao for helpful comments. 	
	
	\newpage

\end{document}